\date{}
\newtheorem{lem}{Lemma}
\newtheorem*{thm*}{Theorem}
\newenvironment{f-proof}[1][\sc D\'emonstration.]{\begin{trivlist}
\item[\hskip \labelsep {\bfseries #1}]}{\hfill{$\square$}\end{trivlist}}
\newcommand{\Prod}{\displaystyle\prod}
\begin{document}

\title{Milne's correcting factor and derived de Rham cohomology}
\author{Baptiste Morin}

\maketitle

\begin{abstract}
Milne's correcting factor is a numerical invariant playing an important role in formulas for special values of zeta functions of varieties over finite fields. We show that Milne's factor is simply the Euler characteristic of the derived de Rham complex (relative to $\mathbb{Z}$) modulo the Hodge filtration.
\end{abstract}

\vspace{1cm}

A Result of Milne (\cite{Milne86} Theorem 0.1) describes the special values of the zeta function of a smooth projective variety over a finite field satisfying the Tate conjecture. This result was later reformulated by Lichtenbaum and Geisser (see \cite{Geisser-Weiletale}, \cite{Lichtenbaum-finite-field} and \cite{Milne13}) as follows. 
They conjecture that
\begin{equation}\label{LG-Conj}
\mathrm{lim}_{t\rightarrow q^{-n}} Z(X,t)\cdot (1-q^{n}t)^{\rho_n}= \pm \chi(H_W^*(X,\mathbb{Z}(n)),\cup e)\cdot q^{\chi(X/\mathbb{F}_q,\mathcal{O}_X,n)}
\end{equation}
and show that (\ref{LG-Conj}) holds whenever the groups $H_W^i(X,\mathbb{Z}(n))$ are finitely generated.
Here $H_W^*(X,\mathbb{Z}(n))$ denotes Weil-\'etale motivic cohomology,  $e\in H^1(W_{\mathbb{F}_q},\mathbb{Z})$ is a fundamental class and $\chi(H_W^*(X,\mathbb{Z}(n)),e)$ is the Euler characteristic of the complex
\begin{equation}\label{LG-complex}
...\stackrel{\cup e}{\longrightarrow} H^i_W(X,\mathbb{Z}(n)) \stackrel{\cup e}{\longrightarrow} H^{i+1}_W(X,\mathbb{Z}(n)) \stackrel{\cup e}{\longrightarrow}... 
\end{equation}
More precisely, the cohomology groups of the complex (\ref{LG-complex}) are finite and $\chi(H_W^*(X,\mathbb{Z}(n)),\cup e)$ is the alternating product of their orders. Finally, Milne's correcting factor $q^{\chi(X/\mathbb{F}_q,\mathcal{O},n)}$ was defined in\cite{Milne86} by the formula
$$\chi(X/\mathbb{F}_q,\mathcal{O}_X,n)=\sum _{i\leq n,j} (-1)^{i+j}\cdot (n-i)\cdot \mathrm{dim}_{\mathbb{F}_q} H^j(X,\Omega^i_{X/\mathbb{F}_q}).$$
It is possible to generalize (\ref{LG-Conj}) in order to give a conjectural description of special values of zeta functions of all separated schemes of finite type over $\mathbb{F}_q$ (see \cite{Geisser-arith-coh} Conjecture 1.4), and even of all motivic complexes over $\mathbb{F}_q$ (see \cite{Milne-Ramachandran13} Conjecture 1.2). The statement of those more general conjectures is in any case very similar to formula (\ref{LG-Conj}). The present note is motivated by the hope for a further generalization, which would apply to zeta functions of all algebraic schemes (and ultimately motives) over $\mathrm{Spec}(\mathbb{Z})$ (see \cite{Morin14} for the case $n=0$). As briefly explained below, the special-value conjecture for (flat) schemes over $\mathrm{Spec}(\mathbb{Z})$ must take a rather different form than formula (\ref{LG-Conj}). Going back to the special case of smooth projective varieties over finite fields, this leads to a slightly different restatement of formula (\ref{LG-Conj}).

Let $\mathcal{X}$ be a regular  scheme proper over $\mathrm{Spec}(\mathbb{Z})$. The "fundamental line"
$$\Delta(\mathcal{X}/\mathbb{Z},n):=\mathrm{det}_{\mathbb{Z}}R\Gamma_{W,c}(\mathcal{X},\mathbb{Z}(n))\otimes_{\mathbb{Z}} \mathrm{det}_{\mathbb{Z}}R\Gamma_{dR}(\mathcal{X}/\mathbb{Z})/F^n$$
should be a well defined invertible $\mathbb{Z}$-module endowed with a canonical trivialization
$$\mathbb{R}\stackrel{\sim}{\longrightarrow}\Delta(\mathcal{X}/\mathbb{Z},n)\otimes_{\mathbb{Z}}\mathbb{R}.$$
involving a fundamental class $\theta\in H^1(\mathbb{R},\mathbb{R})="H^1(W_{\mathbb{F}_1},\mathbb{R})"$ analogous to $e\in H^1(W_{\mathbb{F}_q},\mathbb{Z})$. Here $R\Gamma_{W,c}(\mathcal{X},\mathbb{Z}(n))$ is Weil-\'etale cohomology with compact support, where "compact support" sould be understood in the sense of \cite{Morin14}.
However, there is no natural trivialization  $\mathbb{R}\stackrel{\sim}{\rightarrow}\mathrm{det}_{\mathbb{Z}}R\Gamma_{W,c}(\mathcal{X},\mathbb{Z}(n))\otimes_{\mathbb{Z}} \mathbb{R}$. Consequently, it is not possible to define an Euler characteristic generalizing $\chi(H_W^*(X,\mathbb{Z}(n)),\cup e)$, neither to define a correcting factor generalizing Milne's correcting factor: one is forced to consider the fundamental line as a whole (we should point out that this viewpoint in conflict with \cite{Lichtenbaum09}). Let us go back to the case of smooth projective varieties $X/\mathbb{F}_q$, which we now see as schemes over $\mathbb{Z}$. Accordingly, we replace $Z(X,t)$ with $\zeta(X,s)=Z(X,q^{-s})$, the fundamental class $e$ with $\theta$ and the cotangent sheaf $\Omega^1_{X/\mathbb{F}_q}\simeq L_{X/\mathbb{F}_q}$ with the cotangent complex $L_{X/\mathbb{Z}}$. Assuming that $H^i_W(X,\mathbb{Z}(n))$ is finitely generated for all $i$, the fundamental line
$$\Delta(X/\mathbb{Z},n):=\mathrm{det}_{\mathbb{Z}}R\Gamma_{W}(X,\mathbb{Z}(n))\otimes_{\mathbb{Z}} \mathrm{det}_{\mathbb{Z}}R\Gamma(X,L\Omega^*_{X/\mathbb{Z}}/F^n)$$
is well defined and cup-product with $\theta$ gives a trivialization
$$\lambda:\mathbb{R}\stackrel{\sim}{\longrightarrow}\Delta(X/\mathbb{Z},n)\otimes_{\mathbb{Z}}\mathbb{R}.$$ 
Here $L\Omega^*_{X/\mathbb{Z}}/F^n$ is the derived de Rham complex modulo the Hodge filtration (see \cite{Illusie72} VIII.2.1). The aim of this note is to show that the Euler characteristic of $R\Gamma(X,L\Omega^*_{X/\mathbb{Z}}/F^n)$ equals $q^{\chi(X/\mathbb{F}_q,\mathcal{O}_X,n)}$, hence that Milne's correcting factor is naturally part of the fundamental line. We denote by $\zeta^*(X,n)$ the leading coefficient in the Taylor development of $\zeta(X,s)$ near $s=n$. 
\begin{thm*} Let $X$ be a smooth proper scheme over $\mathbb{F}_q$. Then we have
\begin{equation*}\label{part1}\Prod_{i\in\mathbb{Z}} \mid H^i(X,L\Omega^*_{X/\mathbb{Z}}/F^n)\mid^{(-1)^i}\,\,\, = \,\,\, q^{\chi(X/\mathbb{F}_q,\mathcal{O}_X,n)}.
\end{equation*}
Assume moreover that $X$ is projective and that the groups $H^i_W(X,\mathbb{Z}(n))$ are finitely generated for all $i$. Then one has 
\begin{eqnarray*}
\Delta(X/\mathbb{Z},n)&=&\lambda\left(\mathrm{log}(q)^{\rho_n}\cdot  \chi(H_W^*(X,\mathbb{Z}(n)),\cup e)^{-1}\cdot q^{-\chi(X/\mathbb{F}_q,\mathcal{O}_X,n)}\right)\cdot\mathbb{Z}\\
&=&\lambda\left(\zeta^*(X,n)^{-1}\right)\cdot\mathbb{Z}
\end{eqnarray*}
where $\rho_n:=-\mathrm{ord}_{s=n}\zeta(X,s)$ is the order of the pole of $\zeta(X,s)$ at $s=n$.
\end{thm*}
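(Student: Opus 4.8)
The plan is to prove the two displayed formulas separately; the first one (the equality with Milne's factor) is the real content, and the second follows formally from it together with the Lichtenbaum--Geisser reformulation of Milne's theorem recalled above. Throughout, the projectivity hypothesis is needed only for the second part, where it makes the Weil-\'etale formalism and formula (\ref{LG-Conj}) available; the first part uses only that $X$ is proper over $\mathbb{F}_q$.

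For the first formula I would start from the Hodge filtration on the derived de Rham complex (\cite{Illusie72} VIII.2.1): $L\Omega^*_{X/\mathbb{Z}}/F^n$ carries a finite filtration of length $n$ with graded pieces $L\Lambda^i L_{X/\mathbb{Z}}[-i]$ for $i=0,\dots,n-1$, so $R\Gamma(X,L\Omega^*_{X/\mathbb{Z}}/F^n)$ is a successive extension of the complexes $R\Gamma(X,L\Lambda^i L_{X/\mathbb{Z}})[-i]$. The key input is the structure of $L_{X/\mathbb{Z}}$. Since $\mathbb{F}_q/\mathbb{Z}$ factors through $\mathbb{F}_p=\mathbb{Z}/p\mathbb{Z}$ with $\mathbb{F}_q/\mathbb{F}_p$ finite \'etale, one gets $L_{\mathbb{F}_q/\mathbb{Z}}\simeq L_{\mathbb{F}_p/\mathbb{Z}}\otimes_{\mathbb{F}_p}\mathbb{F}_q\simeq\mathbb{F}_q[1]$, so the transitivity triangle for $X\to\mathrm{Spec}(\mathbb{F}_q)\to\mathrm{Spec}(\mathbb{Z})$ becomes $\mathcal{O}_X[1]\to L_{X/\mathbb{Z}}\to\Omega^1_{X/\mathbb{F}_q}$ (using $L_{X/\mathbb{F}_q}\simeq\Omega^1_{X/\mathbb{F}_q}$). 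Combining the filtration of $L\Lambda^i$ of a distinguished triangle with the d\'ecalage isomorphism $L\Lambda^p(\mathcal{O}_X[1])\simeq(\Gamma^p\mathcal{O}_X)[p]\simeq\mathcal{O}_X[p]$ (\cite{Illusie72} I.4.3) then shows that $L\Lambda^i L_{X/\mathbb{Z}}$ has a finite filtration with graded pieces $\Omega^{i-p}_{X/\mathbb{F}_q}[p]$, $0\le p\le i$.

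All of this is coherent cohomology of the proper $\mathbb{F}_q$-scheme $X$, so each $H^j(X,L\Lambda^i L_{X/\mathbb{Z}})$ is a finite-dimensional $\mathbb{F}_q$-vector space; hence every $H^i(X,L\Omega^*_{X/\mathbb{Z}}/F^n)$ is finite of $q$-power order, the left-hand side of the first formula equals $q^{\chi_0}$ with $\chi_0=\sum_{i=0}^{n-1}(-1)^i\chi\bigl(R\Gamma(X,L\Lambda^i L_{X/\mathbb{Z}})\bigr)$, and it remains only to run the bookkeeping. Using the two filtrations and additivity of the $\mathbb{F}_q$-Euler characteristic $\chi$, one gets $\chi_0=\sum_{i=0}^{n-1}\sum_{p=0}^{i}(-1)^{i+p}\chi(X,\Omega^{i-p}_{X/\mathbb{F}_q})$; the substitution $k=i-p$ together with $\#\{i:k\le i\le n-1\}=n-k$ turns this into $\sum_{k=0}^{n-1}(-1)^k(n-k)\chi(X,\Omega^k_{X/\mathbb{F}_q})=\sum_{i\le n,\,j}(-1)^{i+j}(n-i)\dim_{\mathbb{F}_q}H^j(X,\Omega^i_{X/\mathbb{F}_q})$, where the terms with $i<0$ vanish since $\Omega^i=0$ and the $i=n$ term vanishes since $n-i=0$. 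This is precisely $\chi(X/\mathbb{F}_q,\mathcal{O}_X,n)$, proving the first formula.

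For the second part I would decompose $\Delta(X/\mathbb{Z},n)=\mathrm{det}_{\mathbb{Z}}R\Gamma_W(X,\mathbb{Z}(n))\otimes_{\mathbb{Z}}\mathrm{det}_{\mathbb{Z}}R\Gamma(X,L\Omega^*_{X/\mathbb{Z}}/F^n)$; since the second factor is torsion by the first part, $\cup\theta$ acts trivially on it, so $\lambda=\lambda_W\otimes\mathrm{can}$, where $\mathrm{can}$ is the canonical $\mathbb{R}$-trivialization from acyclicity of $R\Gamma(X,L\Omega^*_{X/\mathbb{Z}}/F^n)\otimes\mathbb{R}$ and $\lambda_W$ is cup-product with $\theta=\mathrm{log}(q)\cdot e$. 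Under $\mathrm{can}$ the first part and the determinant conventions give $\mathrm{det}_{\mathbb{Z}}R\Gamma(X,L\Omega^*_{X/\mathbb{Z}}/F^n)=q^{-\chi(X/\mathbb{F}_q,\mathcal{O}_X,n)}\cdot\mathbb{Z}$. For the Weil-\'etale factor, the finite generation hypothesis makes (\ref{LG-complex}) have finite cohomology and $R\Gamma_W(X,\mathbb{R}(n))$ with differential $\cup e$ acyclic, and the Lichtenbaum--Geisser--Milne computation behind (\ref{LG-Conj}) identifies $\mathrm{det}_{\mathbb{Z}}R\Gamma_W(X,\mathbb{Z}(n))$, trivialized by $\cup e$, with $\pm\chi(H_W^*(X,\mathbb{Z}(n)),\cup e)^{-1}\cdot\mathbb{Z}$; replacing $e$ by $\theta=\mathrm{log}(q)e$ rescales this trivialization by $\mathrm{log}(q)^{\rho_n}$, with $\rho_n=-\mathrm{ord}_{s=n}\zeta(X,s)$ the resulting alternating count. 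Tensoring the two factors yields the first displayed formula; and expanding $1-q^nt=\mathrm{log}(q)(s-n)+O((s-n)^2)$ near $s=n$, $t=q^{-n}$ gives $\zeta^*(X,n)\cdot\mathrm{log}(q)^{\rho_n}=\mathrm{lim}_{t\to q^{-n}}Z(X,t)(1-q^nt)^{\rho_n}$, which by (\ref{LG-Conj}) — valid under the present hypotheses — equals $\pm\chi(H_W^*(X,\mathbb{Z}(n)),\cup e)\cdot q^{\chi(X/\mathbb{F}_q,\mathcal{O}_X,n)}$, giving the second displayed formula. The main obstacle is the first part — pinning down $L_{X/\mathbb{Z}}$ and correctly handling the derived exterior powers (the d\'ecalage isomorphism and the filtration of $L\Lambda^i$ of a triangle) so that the bookkeeping reproduces Milne's expression exactly rather than a shifted or re-indexed variant; in the second part the delicate point is to match the normalizations of the determinant functor, of $\lambda_W$ and $\mathrm{can}$, and the sign of $\rho_n$, with the conventions implicit in (\ref{LG-Conj}).
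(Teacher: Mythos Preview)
Your overall strategy coincides with the paper's: reduce the first formula to the graded pieces $L\Lambda^iL_{X/\mathbb{Z}}[-i]$ of the Hodge filtration, analyze $L_{X/\mathbb{Z}}$ via the transitivity triangle $\mathcal{O}_X[1]\to L_{X/\mathbb{Z}}\to\Omega^1_{X/\mathbb{F}_q}$, and then do the bookkeeping; the second part is the formal unwinding you describe, and the paper does exactly that.

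The one genuine gap is the sentence ``Combining the filtration of $L\Lambda^i$ of a distinguished triangle \ldots\ shows that $L\Lambda^i L_{X/\mathbb{Z}}$ has a finite filtration with graded pieces $\Omega^{i-p}_{X/\mathbb{F}_q}[p]$.'' A distinguished triangle in $\mathcal{D}(\mathcal{O}_X)$ does not, in Illusie's framework, automatically yield such a filtration on $L\Lambda^i$ of the middle term; one needs an honest short exact sequence of flat objects at the level of models. The paper makes this explicit: it notes that the boundary map $\omega\in\mathrm{Ext}^2(\Omega^1,\mathcal{O}_X)$ is the obstruction to lifting $X$ over $\mathbb{Z}/p^2\mathbb{Z}$ and may well be nonzero, so the triangle need not split. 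It then proves a separate lemma showing that nonetheless $\mathrm{det}_{\mathbb{Z}}R\Gamma(X,L\Lambda^pL_{X/\mathbb{Z}})\simeq\mathrm{det}_{\mathbb{Z}}R\Gamma(X,L\Lambda^p(\mathrm{gr}_\tau L_{X/\mathbb{Z}}))$ compatibly with the $\mathbb{Q}$-trivializations, by choosing a global two-term locally free model $L_{X/\mathbb{Z}}\simeq[M\to N]$ (possible since $X\to\mathrm{Spec}\,\mathbb{Z}$ is l.c.i., so $L_{X/\mathbb{Z}}$ has perfect amplitude in $[-1,0]$), writing down the Koszul complex for $L\Lambda^p$, and filtering both sides through the auxiliary exact sequences $0\to\mathcal{L}\to M\to F\to 0$ and $0\to F\to N\to\Omega\to 0$. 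Since you only use the filtration to compute an Euler characteristic, your argument can be repaired---but what repairs it is precisely this lemma (or an equivalent $K_0$/$\lambda$-ring argument), which you should supply rather than cite as a black box. Your closing diagnosis that ``the main obstacle is \ldots\ the filtration of $L\Lambda^i$ of a triangle'' is exactly right; the paper devotes its central lemma to it.
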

Before giving the proof, we need to fix some notations. For an object $C$ in the derived category of abelian groups such that $H^i(C)$ is finitely generated for all $i$ and $H^i(C)=0$ for almost all $i$, we set
$$\mathrm{det}_{\mathbb{Z}}(C):=\bigotimes_{i\in\mathbb{Z}}\mathrm{det}^{(-1)^i}_{\mathbb{Z}}H^i(C).$$ If  $H^i(C)$ is moreover finite for all $i$, then we call the following isomorphism
$$\mathrm{det}_{\mathbb{Z}}(C)\otimes_{\mathbb{Z}}\mathbb{Q}
\stackrel{\sim}{\rightarrow}\bigotimes_{i\in\mathbb{Z}}\mathrm{det}^{(-1)^i}_{\mathbb{Q}}\left(H^i(C)\otimes_{\mathbb{Z}}\mathbb{Q}\right)\stackrel{\sim}{\rightarrow}\bigotimes_{i\in\mathbb{Z}}\mathrm{det}^{(-1)^i}_{\mathbb{Q}}(0)\stackrel{\sim}{\rightarrow}\mathbb{Q}$$
the \emph{canonical $\mathbb{Q}$-trivialization} of $\mathrm{det}_{\mathbb{Z}}(C)$. If $A$ is a finite abelian group, we see $A$ as a complex concentrated in degree $0$ and use the same terminology for $\mathrm{det}_{\mathbb{Z}}(A)\otimes_{\mathbb{Z}}\mathbb{Q}\simeq\mathbb{Q}$. Finally, the notation $R\Gamma(X,-)$ refers to hypercohomology with respect to the Zariski topology.

\begin{proof} Firstly we prove the first assertion of the Theorem. Since Milne's correcting factor is insensitive to restriction of scalars (i.e. $q^{\chi(X/\mathbb{F}_q,\mathcal{O}_X,n)}=p^{\chi(X/\mathbb{F}_p,\mathcal{O}_X,n)}$), we may consider $X$ over $\mathbb{F}_p$. We need the  following
\begin{lem}\label{lemSS}
Let $E_*^{*,*}=(E_r^{p,q},d_r^{p,q})_r^{p,q}$ be a cohomological spectral sequence of abelian groups with abutment $H^*$. Assume that there exists an index $r_0$ such that $E_{r_0}^{p,q}$ is finite for all $(p,q)\in\mathbb{Z}^2$ and $E_{r_0}^{p,q}=0$ for all but finitely many $(p,q)$.   Then we have a canonical isomorphism
\begin{equation*}
\iota:\bigotimes_{p,q} \mathrm{det}^{(-1)^{p+q}}_{\mathbb{Z}}E_{r_0}^{p,q}\stackrel{\sim}{\longrightarrow}\bigotimes_{n} \mathrm{det}^{(-1)^{n}}_{\mathbb{Z}}H^{n}
\end{equation*}
such that the square of isomorphisms
\[ \xymatrix{
\left(\bigotimes_{p,q} \mathrm{det}^{(-1)^{p+q}}_{\mathbb{Z}}E_{r_0}^{p,q}\right)\otimes\mathbb{Q}\ar[r]^{\,\,\,\,\,\,\iota\otimes\mathbb{Q}}\ar[d]&
\ar[d]^{}\left(\bigotimes_{n}\mathrm{det}^{(-1)^{n}}_{\mathbb{Z}}H^{n}\right)\otimes\mathbb{Q}\\
\mathbb{Q}\ar[r]^{\mathrm{Id}}&\mathbb{Q}
}
\]
commutes, where the vertical maps are the canonical $\mathbb{Q}$-trivializations.
\end{lem}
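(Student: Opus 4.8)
The plan is to build the isomorphism $\iota$ by induction on the pages of the spectral sequence, treating each page transition as a finite collection of short exact sequences. First I would recall the basic functoriality of the determinant: a short exact sequence $0\to A'\to A\to A''\to 0$ of finitely generated abelian groups yields a canonical isomorphism $\mathrm{det}_{\mathbb{Z}}(A)\stackrel{\sim}{\to}\mathrm{det}_{\mathbb{Z}}(A')\otimes_{\mathbb{Z}}\mathrm{det}_{\mathbb{Z}}(A'')$, and when all three groups are finite this isomorphism is compatible with the canonical $\mathbb{Q}$-trivializations (this is just the multiplicativity of cardinality, $|A|=|A'|\cdot|A''|$). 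More generally, for a bounded complex of finite groups the determinant of the complex is canonically trivialized by its cohomology, and this trivialization is compatible with the canonical $\mathbb{Q}$-trivialization — the alternating product of the orders of the cohomology groups equals the alternating product of the orders of the terms.

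The key step is then the page transition. Fix $r\geq r_0$ and note that $E_{r+1}^{p,q}=\ker(d_r^{p,q})/\mathrm{im}(d_r^{p-r,q+r-1})$, i.e. $E_{r+1}^{*,*}$ is the cohomology of the complex $(E_r^{*,*},d_r)$, which decomposes as a direct sum over the diagonals $p+q=\mathrm{const}$ shifted appropriately — more precisely, the differentials $d_r$ have bidegree $(r,1-r)$, so they preserve total degree $n=p+q$ up to a shift, and one groups the $E_r^{p,q}$ into finite complexes indexed by the lines along which $d_r$ moves. Since each such complex consists of finitely many finite groups, applying the determinant-of-a-complex fact diagonal by diagonal gives a canonical isomorphism $\bigotimes_{p,q}\mathrm{det}^{(-1)^{p+q}}_{\mathbb{Z}}E_r^{p,q}\stackrel{\sim}{\to}\bigotimes_{p,q}\mathrm{det}^{(-1)^{p+q}}_{\mathbb{Z}}E_{r+1}^{p,q}$ compatible with the canonical $\mathbb{Q}$-trivializations. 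By the finiteness and boundedness hypothesis on $E_{r_0}$, all later pages are still finite and bounded, and the spectral sequence degenerates at some finite page $E_\infty=E_N$; composing the finitely many page isomorphisms yields $\bigotimes_{p,q}\mathrm{det}^{(-1)^{p+q}}_{\mathbb{Z}}E_{r_0}^{p,q}\stackrel{\sim}{\to}\bigotimes_{p,q}\mathrm{det}^{(-1)^{p+q}}_{\mathbb{Z}}E_\infty^{p,q}$.

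Finally I would handle the passage from $E_\infty$ to the abutment $H^*$. The filtration $\cdots\subseteq F^{p+1}H^n\subseteq F^pH^n\subseteq\cdots$ on each $H^n$ is finite (again by boundedness), with graded pieces $F^pH^n/F^{p+1}H^n\cong E_\infty^{p,n-p}$, all finite. Iterating the short-exact-sequence rule for determinants up the filtration gives $\mathrm{det}_{\mathbb{Z}}H^n\stackrel{\sim}{\to}\bigotimes_p\mathrm{det}_{\mathbb{Z}}E_\infty^{p,n-p}$, compatible with $\mathbb{Q}$-trivializations; raising to the sign $(-1)^n$ and tensoring over $n$ produces the desired $\bigotimes_{p,q}\mathrm{det}^{(-1)^{p+q}}_{\mathbb{Z}}E_\infty^{p,q}\stackrel{\sim}{\to}\bigotimes_n\mathrm{det}^{(-1)^n}_{\mathbb{Z}}H^n$. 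Composing with the page isomorphisms defines $\iota$, and the compatibility with canonical $\mathbb{Q}$-trivializations is automatic since every elementary step was compatible.

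I expect the main obstacle to be purely bookkeeping rather than conceptual: one must be careful that the identifications $E_{r+1}=H(E_r,d_r)$ and $\mathrm{gr}\,H^n=E_\infty^{*,*}$ are used with the correct signs and the correct grouping into diagonals, so that the tensor factors are matched up consistently and the total sign $(-1)^{p+q}$ versus $(-1)^n$ works out; it is worth stating the determinant-of-a-bounded-complex lemma cleanly first (including its compatibility with $\mathbb{Q}$-trivialization) and then invoking it uniformly at each stage. Once that lemma is in place, nothing else requires genuine work — and in fact the entire statement is morally just the assertion that the alternating product of the orders $\prod_{p,q}|E_{r_0}^{p,q}|^{(-1)^{p+q}}$ equals $\prod_n|H^n|^{(-1)^n}$, refined to the level of determinant lines.
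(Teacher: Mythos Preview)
Your proposal is correct and follows essentially the same strategy as the paper: form the total complex $C_t^n=\bigoplus_{p+q=n}E_t^{p,q}$ on each page, use that its cohomology is $\bigoplus_{p+q=n}E_{t+1}^{p,q}$ to get page-to-page determinant isomorphisms compatible with the $\mathbb{Q}$-trivializations, note degeneration at a finite page, and then pass from $E_\infty$ to $H^*$ via the finite filtration. The only cosmetic difference is that you further decompose each $C_t^*$ along the lines of slope $(r,1-r)$ traced out by $d_r$, whereas the paper works directly with the total complex; this is harmless (indeed equivalent), though your phrase ``decomposes as a direct sum over the diagonals $p+q=\mathrm{const}$'' is slightly misleading --- the diagonals give the grading, not a direct-sum decomposition of the complex.
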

\begin{proof} For any $t\geq r_0$, consider the bounded cochain complex $C_t^*$ of finite abelian groups:
$$...\longrightarrow \bigoplus_{p+q=n-1} E_t^{p,q}\longrightarrow \bigoplus_{p+q=n} E_t^{p,q}\stackrel{\oplus d_t^{p,q}}{\longrightarrow} \bigoplus_{p+q=n+1} E_t^{p,q}\longrightarrow ....$$
The fact that the cohomology of $C_t^*$ is given
by $H^n(C_t^*)= \bigoplus_{p+q=n} E_{t+1}^{p,q}$ gives an isomorphism
$$\bigotimes_{p,q} \mathrm{det}^{(-1)^{p+q}}_{\mathbb{Z}}E_{t}^{p,q}\stackrel{\sim}{\longrightarrow}\bigotimes_{p,q} \mathrm{det}^{(-1)^{p+q}}_{\mathbb{Z}}E_{t+1}^{p,q}$$
compatible with the canonical $\mathbb{Q}$-trivializations. By assumption, there exists an index $r_1\geq r_0$ such that the spectral sequence degenerates at the $r_1$-page, i.e. $E^{*,*}_{r_1}=E^{*,*}_{\infty}$. The induced filtration on each $H^n$ is such that
$\mathrm{gr}^p H^n=E_{\infty}^{p,n-p}$. We obtain isomorphisms
$$\bigotimes_{p,q} \mathrm{det}^{(-1)^{p+q}}_{\mathbb{Z}}E_{r_0}^{p,q}\stackrel{\sim}{\rightarrow}\bigotimes_{p,q} \mathrm{det}^{(-1)^{p+q}}_{\mathbb{Z}}E_{\infty}^{p,q}\stackrel{\sim}{\rightarrow}
\bigotimes_{n} \bigotimes_{p} \mathrm{det}^{(-1)^{n}}_{\mathbb{Z}}E_{\infty}^{p,n-p}\stackrel{\sim}{\rightarrow}\bigotimes_{n} \mathrm{det}^{(-1)^{n}}_{\mathbb{Z}}H^{n}$$
compatible with the canonical $\mathbb{Q}$-trivializations.
\end{proof}
Consider the Hodge filtration $F^*$  on the derived de Rham complex $L\Omega^*_{X/\mathbb{Z}}$. By (\cite{Illusie72} VIII.2.1.1.5) we have 
$$\mathrm{gr}(L\Omega^*_{X/\mathbb{Z}})\simeq \bigoplus_{p\geq 0} L\Lambda^pL_{X/\mathbb{Z}}[-p].$$
This gives a (convergent) spectral sequence
$$E_1^{p,q}=H^q(X,L\Lambda^{p<n}L_{X/\mathbb{Z}})\Longrightarrow
H^{p+q}(X,L\Omega^*_{X/\mathbb{Z}}/F^n)$$
where $L\Lambda^{p<n}L_{X/\mathbb{Z}}:=L\Lambda^{p}L_{X/\mathbb{Z}}$ for $p<n$ and $L\Lambda^{p<n}L_{X/\mathbb{Z}}:=0$ otherwise. The scheme $X$ is proper and $L\Lambda^pL_{X/\mathbb{Z}}$ is isomorphic, in the derived category  $\mathcal{D}(\mathcal{O}_X)$ of $\mathcal{O}_X$-modules, to a bounded complex of coherent sheaves, so that $E_1^{p,q}$ is a finite dimensional $\mathbb{F}_p$-vector space for all $(p,q)$ vanishing for almost all $(p,q)$. By Lemma \ref{lemSS}, this yields
isomorphisms
\begin{eqnarray*}
\mathrm{det}_{\mathbb{Z}}R\Gamma(X,L\Omega^*_{X/\mathbb{Z}}/F^n)&\stackrel{\sim}{\longrightarrow}&\bigotimes_{i}\mathrm{det}^{(-1)^i}_{\mathbb{Z}}H^i(X,L\Omega^*_{X/\mathbb{Z}}/F^n)\\
&\stackrel{\sim}{\longrightarrow}&\bigotimes_{p<n,q}\mathrm{det}^{(-1)^{p+q}}_{\mathbb{Z}}H^q(X,L\Lambda^pL_{X/\mathbb{Z}})\\
&\stackrel{\sim}{\longrightarrow}&\bigotimes_{p<n}\mathrm{det}^{(-1)^{p}}_{\mathbb{Z}}R\Gamma(X,L\Lambda^pL_{X/\mathbb{Z}})
\end{eqnarray*}
which are compatible with the canonical  $\mathbb{Q}$-trivializations. The transitivity triangle (see \cite{Illusie71} II.2.1) for the composite map $X\stackrel{f}{\rightarrow} \mathrm{Spec}(\mathbb{F}_p)\rightarrow \mathrm{Spec}(\mathbb{Z})$ reads as follows (using \cite{Illusie71} III.3.1.2 and \cite{Illusie71} III.3.2.4(iii)):
\begin{equation}\label{etr}
Lf^*(p\mathbb{Z}/p^2\mathbb{Z})[1]\rightarrow L_{X/\mathbb{Z}}\rightarrow \Omega^1_{X/\mathbb{F}_p}[0] \stackrel{\omega}{\rightarrow} Lf^*(p\mathbb{Z}/p^2\mathbb{Z})[2].
\end{equation}
We set $\mathcal{L}:=Lf^*(p\mathbb{Z}/p^2\mathbb{Z})$, a trivial invertible $\mathcal{O}_{X}$-module. By (\cite{Illusie71} Th\'eor\`eme III.2.1.7), the class 
$$\omega\in \mathrm{Ext}^2_{\mathcal{O}_{X}}(\Omega^1_{X/\mathbb{F}_p},\mathcal{L})\simeq H^2(X,T_{X/\mathbb{F}_p})$$ is the obstruction to the existence of a lifting of $X$ over $\mathbb{Z}/p^2\mathbb{Z}$. If such a lifting does exist then we have $\omega=0$, in which case the following lemma is superfluous. For an object $C$ of $D(\mathcal{O}_X)$ with bounded cohomology, we set $$\mathrm{gr}_{\tau}C:=\bigoplus_{i\in \mathbb{Z}} H^i(C)[-i].$$
\begin{lem}
We have an isomorphism
$$\mathrm{det}_{\mathbb{Z}}R\Gamma(X, L\Lambda^p L_{X/\mathbb{Z}})\simeq \mathrm{det}_{\mathbb{Z}}R\Gamma(X, L\Lambda^p (\mathrm{gr}_{\tau}L_{X/\mathbb{Z}}))$$
compatible with the canonical $\mathbb{Q}$-trivializations.
\end{lem}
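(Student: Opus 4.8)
The plan is to compare the two complexes through the truncation filtration of $L_{X/\mathbb{Z}}$ together with Illusie's description of the derived exterior power of a filtered complex. First I would read off $\mathrm{gr}_\tau L_{X/\mathbb{Z}}$ from the transitivity triangle (\ref{etr}): since $\mathcal{L}$ is an invertible $\mathcal{O}_X$-module, the long exact sequence of cohomology sheaves of (\ref{etr}) shows $H^{-1}(L_{X/\mathbb{Z}})\simeq\mathcal{L}$, $H^0(L_{X/\mathbb{Z}})\simeq\Omega^1_{X/\mathbb{F}_p}$ and $H^i(L_{X/\mathbb{Z}})=0$ for $i\neq-1,0$; hence $\mathrm{gr}_\tau L_{X/\mathbb{Z}}\simeq\mathcal{L}[1]\oplus\Omega^1_{X/\mathbb{F}_p}$. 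Note that $\omega$ need not vanish, so $L_{X/\mathbb{Z}}$ itself need not be this direct sum; only its truncation filtration will matter.

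Next, the canonical truncation gives a short exact sequence of complexes $0\to\tau_{\leq-1}L_{X/\mathbb{Z}}\to L_{X/\mathbb{Z}}\to\tau_{\geq0}L_{X/\mathbb{Z}}\to0$ whose outer terms are quasi-isomorphic to $\mathcal{L}[1]$ and $\Omega^1_{X/\mathbb{F}_p}$. By Illusie's formalism for the derived exterior power of a filtered complex (\cite{Illusie71}, ch.\ I), $L\Lambda^p L_{X/\mathbb{Z}}$ inherits a finite filtration whose graded pieces are
$$\mathrm{gr}^i\bigl(L\Lambda^p L_{X/\mathbb{Z}}\bigr)\simeq L\Lambda^i(\mathcal{L}[1])\otimes^{\mathbf{L}}_{\mathcal{O}_X}L\Lambda^{p-i}\bigl(\Omega^1_{X/\mathbb{F}_p}\bigr),\qquad 0\leq i\leq p,$$
which are exactly the direct summands of $L\Lambda^p(\mathrm{gr}_\tau L_{X/\mathbb{Z}})=\bigoplus_{i=0}^{p}L\Lambda^i(\mathcal{L}[1])\otimes^{\mathbf{L}}_{\mathcal{O}_X}L\Lambda^{p-i}(\Omega^1_{X/\mathbb{F}_p})$. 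The décalage isomorphism $L\Lambda^i(\mathcal{L}[1])\simeq(\Gamma^i\mathcal{L})[i]\simeq\mathcal{L}^{\otimes i}[i]$ (valid since $\mathcal{L}$ is a line bundle) and the identification $L\Lambda^{p-i}(\Omega^1_{X/\mathbb{F}_p})\simeq\Omega^{p-i}_{X/\mathbb{F}_p}$ (valid since $X/\mathbb{F}_p$ is smooth) then exhibit each such piece as a shift $(\mathcal{L}^{\otimes i}\otimes_{\mathcal{O}_X}\Omega^{p-i}_{X/\mathbb{F}_p})[i]$ of a locally free coherent sheaf, so the derived tensor product above is in fact underived.

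Finally I would pass to determinants. The filtration of $D:=L\Lambda^p L_{X/\mathbb{Z}}$ gives a convergent spectral sequence $E_1^{i,j}=H^{i+j}(X,\mathrm{gr}^i D)\Rightarrow H^{i+j}(X,D)$; since $X$ is proper over $\mathbb{F}_p$ and the $\mathrm{gr}^i D$ are bounded complexes of coherent sheaves, each $E_1^{i,j}$ is a finite $\mathbb{F}_p$-vector space, zero for all but finitely many $(i,j)$. Lemma \ref{lemSS} then yields an isomorphism $\mathrm{det}_{\mathbb{Z}}R\Gamma(X,L\Lambda^p L_{X/\mathbb{Z}})\simeq\bigotimes_{i=0}^{p}\mathrm{det}_{\mathbb{Z}}R\Gamma(X,\mathrm{gr}^i D)$ compatible with the canonical $\mathbb{Q}$-trivializations, and the right-hand side equals $\mathrm{det}_{\mathbb{Z}}R\Gamma(X,L\Lambda^p(\mathrm{gr}_\tau L_{X/\mathbb{Z}}))$ because $L\Lambda^p(\mathrm{gr}_\tau L_{X/\mathbb{Z}})$ is the direct sum of these same graded pieces and $\mathrm{det}_{\mathbb{Z}}$ turns direct sums into tensor products; this last step again respects the $\mathbb{Q}$-trivializations.

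The step I expect to be the main obstacle is the middle one: one must be sure that the filtration on $L\Lambda^p$ of the genuine (possibly non-split) extension $L_{X/\mathbb{Z}}$ has precisely the asserted associated graded — that is, that the class $\omega$ affects only the extensions between the graded pieces and not the pieces themselves — and that the décalage identification $L\Lambda^i(\mathcal{L}[1])\simeq(\Gamma^i\mathcal{L})[i]$ holds integrally rather than merely rationally. Both are contained in Illusie's theory of derived exterior and divided powers, so the actual work is to quote the correct statements; granting them, the determinant bookkeeping via Lemma \ref{lemSS} is formal.
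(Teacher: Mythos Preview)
Your plan is sound and, once the filtration on $L\Lambda^p$ is justified, gives the result; but the paper takes a noticeably different, more explicit route. Rather than invoking a general compatibility of $L\Lambda^p$ with the truncation filtration, the author first chooses a global two--term perfect model $L_{X/\mathbb{Z}}\simeq[M\to N]$ with $M,N$ locally free, introduces the auxiliary locally free sheaf $F:=\mathrm{im}(M\to N)$, and works with the two \emph{ordinary} short exact sequences $0\to\mathcal{L}\to M\to F\to 0$ and $0\to F\to N\to\Omega\to 0$. Both $L\Lambda^pL_{X/\mathbb{Z}}$ and $L\Lambda^p(\mathrm{gr}_\tau L_{X/\mathbb{Z}})$ are then represented by explicit Koszul-type complexes $[\Gamma^pM\to\cdots\to\Lambda^pN]$ and $[\Gamma^p\mathcal{L}\to\cdots\to\Lambda^p\Omega]$, and the classical filtrations on $\Lambda^qN$ and $\Gamma^{p-i}\mathcal{L}$ coming from those two exact sequences of \emph{modules} break both determinants into the same collection of elementary pieces $\mathrm{det}_{\mathbb{Z}}R\Gamma(X,\Gamma^aM\otimes\Lambda^bF\otimes\Lambda^c\Omega)$, all compatibly with the $\mathbb{Q}$-trivializations via repeated use of Lemma~\ref{lemSS}.

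So the comparison is this: you filter $L_{X/\mathbb{Z}}$ itself and push the filtration through $L\Lambda^p$ in one step, whereas the paper filters the terms of the explicit Koszul representatives and matches the two sides through a common refinement. Your approach is shorter and more conceptual, but it rests precisely on the point you flag as the ``main obstacle'': that $L\Lambda^p$ of a genuine short exact sequence of complexes carries a filtration with graded pieces $L\Lambda^iK'\otimes^{\mathbf L}L\Lambda^{p-i}K''$. The paper's route sidesteps this by only ever using the module-level Koszul filtration (Illusie I.4.3.1.7) and the explicit formula for $L\Lambda^p$ of a two-term flat complex, at the cost of a longer bookkeeping argument with the auxiliary sheaf $F$.
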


\begin{proof} 
The map $X\rightarrow \mathrm{Spec}(\mathbb{Z})$ is a local complete intersection, hence the complex $L_{X/\mathbb{Z}}$ has perfect amplitude $\subset[-1,0]$ (see \cite{Illusie71} III.3.2.6). In other words, $L_{X/\mathbb{Z}}$ is locally isomorphic in $\mathcal{D}(\mathcal{O}_X)$ to a complex of free modules of finite type concentrated in degrees $-1$ and $0$. By (\cite{SGA6II} 2.2.7.1) and (\cite{SGA6II} 2.2.8), $L_{X/\mathbb{Z}}$ is globally isomorphic to such a complex, i.e. there exists an isomorphism $L_{X/\mathbb{Z}}\simeq [M\rightarrow N]$ in $\mathcal{D}(\mathcal{O}_X)$, where $M$ and $N$ are finitely generated locally free $\mathcal{O}_X$-modules put in degrees $-1$ and $0$ respectively. Consider the exact sequences
\begin{equation}\label{2ES}
0\rightarrow \mathcal{L}\rightarrow M\rightarrow F\rightarrow 0\mbox{ and } 0\rightarrow F\rightarrow N\rightarrow \Omega\rightarrow 0
\end{equation}
where $\mathcal{L}:=Lf^*(p\mathbb{Z}/p^2\mathbb{Z})$ and $\Omega:= \Omega^1_{X/\mathbb{F}_p}$ are finitely generated and locally free. It follows that $F$ is also finitely generated and locally free. One has an isomorphism in $\mathcal{D}(\mathcal{O}_X)$
\begin{equation}\label{QI}
L\Lambda^pL_{X/\mathbb{Z}}\simeq [\Gamma^pM\rightarrow \Gamma^{p-1}M\otimes N\rightarrow ... \rightarrow
M\otimes\Lambda^{p-1} N\rightarrow \Lambda^{p} N]
\end{equation}
where the right hand side is concentrated in degrees $[-p,0]$  (see \cite{Illusie72} VIII.2.1.2 and \cite{Illusie71} I.4.3.2.1). Moreover, in view of (\ref{etr}) we may choose an isomorphism
$$\mathrm{gr}_{\tau}L_{X/\mathbb{Z}}\simeq [\mathcal{L}\stackrel{0}{\rightarrow}\Omega]$$
in $\mathcal{D}(\mathcal{O}_{X})$, the right hand side being concentrated in degrees $[-1,0]$. Hence the complex $L\Lambda^p(\mathrm{gr}_{\tau}L_{X/\mathbb{Z}})\in\mathcal{D}(\mathcal{O}_{X})$ is represented by a complex of the form
\begin{equation}\label{QI2}
L\Lambda^p(\mathrm{gr}_{\tau}L_{X/\mathbb{Z}})\simeq L\Lambda^p([\mathcal{L}\rightarrow\Omega])
\simeq [\Gamma^p\mathcal{L}\rightarrow \Gamma^{p-1}\mathcal{L}\otimes \Omega\rightarrow ... \rightarrow
\mathcal{L}\otimes\Lambda^{p-1} \Omega\rightarrow \Lambda^{p} \Omega]
\end{equation}
concentrated in degrees $[-p,0]$. Lemma \ref{lemSS} and (\ref{QI}) give an isomorphism
\begin{equation}\label{IS0}
\mathrm{det}_{\mathbb{Z}}R\Gamma(X, L\Lambda^p L_{X/\mathbb{Z}})\simeq \bigotimes_{0\leq q\leq p} \mathrm{det}^{(-1)^{p-q}}_{\mathbb{Z}}R\Gamma(X,
\Gamma^{p-q}M\otimes \Lambda^q N)
\end{equation}
compatible with the $\mathbb{Q}$-trivializations.
The second exact sequence in (\ref{2ES}) endows $\Lambda^{q}N$ with a finite decreasing filtration $\mathrm{Fil}^*$ such that $\mathrm{gr}_{\mathrm{Fil}}^i(\Lambda^{q}N)=\Lambda^{i}F\otimes \Lambda^{q-i}\Omega$. Since $\Gamma^{p-q}M$ is flat, $\mathrm{Fil}^*$ induces a similar filtration on $\Gamma^{p-q}M\otimes\Lambda^{q}N$ such that
$$\mathrm{gr}_{\mathrm{Fil}}^i(\Gamma^{p-q}M\otimes \Lambda^{q}N)=\Gamma^{p-q}M\otimes \Lambda^{i}F\otimes \Lambda^{q-i}\Omega.$$
This filtration induces an isomorphism 
\begin{equation}\label{IS}
\mathrm{det}_{\mathbb{Z}}R\Gamma(X,\Gamma^{p-q}M\otimes\Lambda^{q}N)\simeq
\bigotimes_{0\leq i\leq q} \mathrm{det}_{\mathbb{Z}}R\Gamma(X,\Gamma^{p-q}M\otimes \Lambda^{i}F\otimes \Lambda^{q-i}\Omega)
\end{equation}
compatible with the $\mathbb{Q}$-trivializations. Lemma \ref{lemSS} and (\ref{QI2}) give an isomorphism
\begin{equation}\label{IS2}
\mathrm{det}_{\mathbb{Z}}R\Gamma(X,L\Lambda^p(\mathrm{gr}_{\tau}L_{X/\mathbb{Z}}))\simeq
\bigotimes_{0\leq i\leq p} \mathrm{det}^{(-1)^{p-i}}_{\mathbb{Z}}R\Gamma(X,\Gamma^{p-i}\mathcal{L}\otimes \Lambda^i\Omega)
\end{equation}
compatible with the $\mathbb{Q}$-trivializations. Moreover, we have an isomorphism  (see \cite{Illusie71} I.4.3.1.7)
$$\Gamma^{p-i}\mathcal{L}\simeq[\Gamma^{p-i}M\rightarrow
\Gamma^{p-i-1}M\otimes F \rightarrow ... \rightarrow
M\otimes \Lambda^{p-i-1}F \rightarrow
\Lambda^{p-i}F]$$
where the right hand side sits in degrees $[0,p-i]$. Since $\Lambda^i\Omega$ is flat, we have an isomorphism between 
$\Gamma^{p-i}\mathcal{L}\otimes \Lambda^i\Omega$ and 
$$[\Gamma^{p-i}M \otimes \Lambda^i\Omega\rightarrow
\Gamma^{p-i-1}M\otimes F \otimes \Lambda^i\Omega \rightarrow ... \rightarrow
M\otimes \Lambda^{p-i-1}F \otimes \Lambda^i\Omega\rightarrow
\Lambda^{p-i}F\otimes \Lambda^i\Omega].$$
By Lemma \ref{lemSS}, we have
\begin{equation}\label{IS3}
\mathrm{det}_{\mathbb{Z}}R\Gamma(X,\Gamma^{p-i}\mathcal{L}\otimes \Lambda^i\Omega)
\simeq
\bigotimes_{0\leq j\leq p-i} \mathrm{det}^{(-1)^{j}}_{\mathbb{Z}}R\Gamma(X,
\Gamma^{p-i-j}M\otimes \Lambda^j F \otimes \Lambda^i\Omega).
\end{equation}
Putting (\ref{IS2}), (\ref{IS3}), (\ref{IS})  and  (\ref{IS0}) together, we obtain isomorphisms
\begin{eqnarray*}
\mathrm{det}_{\mathbb{Z}}R\Gamma(X, L\Lambda^p(\mathrm{gr}_{\tau}L_{X/\mathbb{Z}}))
&\simeq&\bigotimes_{0\leq i\leq p} \mathrm{det}^{(-1)^{p-i}}_{\mathbb{Z}}R\Gamma(X,\Gamma^{p-i}\mathcal{L}\otimes \Lambda^i\Omega)\\
&\simeq&\bigotimes_{0\leq i\leq p}\left(\bigotimes_{0\leq j\leq p-i} \mathrm{det}^{(-1)^{p-i-j}}_{\mathbb{Z}}R\Gamma(X,\Gamma^{p-i-j}M\otimes \Lambda^j F \otimes \Lambda^i\Omega)\right)\\
&=&
\bigotimes_{0\leq q\leq p}\left(\bigotimes_{0\leq i, j\,\, ;\, i+j=q} \mathrm{det}^{(-1)^{p-q}}_{\mathbb{Z}}R\Gamma(X,
\Gamma^{p-q}M\otimes \Lambda^j F \otimes \Lambda^i\Omega)\right)\\
&\simeq& 
\bigotimes_{0\leq q\leq p} \mathrm{det}^{(-1)^{p-q}}_{\mathbb{Z}}R\Gamma(X,
\Gamma^{p-q}M\otimes \Lambda^q N)\\
&\simeq& \mathrm{det}_{\mathbb{Z}}R\Gamma(X, L\Lambda^p L_{X/\mathbb{Z}})
\end{eqnarray*}
compatible with the canonical $\mathbb{Q}$-trivializations.
\end{proof}
Recall from (\ref{QI2}) that the complex $L\Lambda^p(\mathrm{gr}_{\tau}L_{X/\mathbb{Z}})$ is isomorphic in $\mathcal{D}(\mathcal{O}_{X})$ to a complex of the form
$$0\rightarrow \Gamma^p\mathcal{L}\rightarrow \Gamma^{p-1}\mathcal{L}\otimes \Omega^1_{X/\mathbb{F}_p}\rightarrow ... \rightarrow \Gamma^{1}\mathcal{L}\otimes \Omega^{p-1}_{X/\mathbb{F}_p}\rightarrow \Omega^{p}_{X/\mathbb{F}_p}\rightarrow 0$$
concentrated in degrees $[-p,0]$. Choose an isomorphism of $\mathbb{F}_p$-vector spaces $\mathbb{F}_p\simeq p\mathbb{Z}/p^2\mathbb{Z}$. This induces an identification $\mathcal{L}\simeq \mathcal{O}_{X}$, and more generally
$$\Gamma^i\mathcal{L}\simeq S^i\mathcal{L}\simeq \mathcal{O}_{X}$$
for any $i\geq 0$. Hence $(L\Lambda^p(\mathrm{gr}_{\tau}L_{X/\mathbb{Z}}))[-p]\in\mathcal{D}(\mathcal{O}_{X})$ is represented by a complex of the form
\begin{equation}\label{here}
0\rightarrow \mathcal{O}_{X}\rightarrow \Omega^1_{X/\mathbb{F}_p}\rightarrow ... \rightarrow \Omega^{p}_{X/\mathbb{F}_p}\rightarrow 0
\end{equation}
concentrated in degrees $[0,p]$. Hence there is a spectral sequence of the form 
$$E_1^{i,j}=H^j(X,\Omega^{i\leq p}_{X/\mathbb{F}_p})\Longrightarrow H^{i+j}(X,(L\Lambda^p(\mathrm{gr}_{\tau}L_{X/\mathbb{Z}}))[-p])$$
where $\Omega^{i\leq p}:=\Omega^{i}$ for $i\leq p$ and $\Omega^{i\leq p}:=0$ for $i> p$. By Lemma \ref{lemSS} again, we get an identification
\begin{eqnarray*}
  \bigotimes_{i\leq p,j}\mathrm{det}_{\mathbb{Z}}^{(-1)^{i+j}} H^{j}(X,\Omega^i_{X/\mathbb{F}_p})&\stackrel{\sim}{\longrightarrow}&\mathrm{det}_{\mathbb{Z}}R\Gamma(X,(L\Lambda^p(\mathrm{gr}_{\tau}L_{X/\mathbb{Z}}))[-p])\\
&\stackrel{\sim}{\longrightarrow}&\mathrm{det}_{\mathbb{Z}}^{(-1)^{p}} R\Gamma(X,L\Lambda^p(\mathrm{gr}_{\tau}L_{X/\mathbb{Z}})).
\end{eqnarray*}
In summary, we have the following isomorphisms
\begin{eqnarray}
\mathrm{det}_{\mathbb{Z}}R\Gamma(X,L\Omega^*_{X/\mathbb{Z}}/F^n)
&\stackrel{\sim}{\longrightarrow}&\bigotimes_{p<n}\mathrm{det}^{(-1)^{p}}_{\mathbb{Z}}R\Gamma(X,L\Lambda^pL_{X/\mathbb{Z}})\\
&\stackrel{\sim}{\longrightarrow}&\bigotimes_{p<n}\mathrm{det}^{(-1)^{p}}_{\mathbb{Z}}R\Gamma(X,L\Lambda^p(\mathrm{gr}_{\tau}L_{X/\mathbb{Z}}))\\
\label{last}&\stackrel{\sim}{\longrightarrow}&\bigotimes_{p<n}\left(\bigotimes_{i\leq p,j}\mathrm{det}_{\mathbb{Z}}^{(-1)^{i+j}} H^{j}(X,\Omega^i_{X/\mathbb{F}_p})\right)
\end{eqnarray}
such that the square
\[ \xymatrix{
\left(\mathrm{det}_{\mathbb{Z}}R\Gamma(X,L\Omega^*_{X/\mathbb{Z}}/F^n)\right)\otimes\mathbb{Q}\ar[r]\ar[d]^{\gamma}&
\ar[d]^{\gamma'}\left(\bigotimes_{p<n}\bigotimes_{i\leq p,j}\mathrm{det}^{(-1)^{i+j}}_{\mathbb{Z}}H^{j}(X,\Omega^i_{X/\mathbb{F}_p})\right)\otimes\mathbb{Q}\\
\mathbb{Q}\ar[r]^{\mathrm{Id}}&\mathbb{Q}
}
\]
commutes, where the top horizontal map is induced by (\ref{last}), and the vertical isomorphisms are the canonical trivializations. The first assertion of the theorem follows:
\begin{eqnarray*}
\left(\Prod_{i\in\mathbb{Z}} \mid H^i(X,L\Omega^*_{X/\mathbb{Z}}/F^n)\mid^{(-1)^i}\right)^{-1}\cdot\mathbb{Z}
&=&\gamma\left(\mathrm{det}_{\mathbb{Z}}R\Gamma(X,L\Omega^*_{X/\mathbb{Z}}/F^n)\right)\\
&=&\gamma'\left(\bigotimes_{p<n} \bigotimes_{i\leq p,j}\mathrm{det}^{(-1)^{i+j}}_{\mathbb{Z}}H^{j}(X,\Omega^i_{X/\mathbb{F}_p})\right)\\
&=&p^{-\chi(X/\mathbb{F}_p,\mathcal{O}_{X},n)}\cdot \mathbb{Z}.
\end{eqnarray*}\\

We now explain why the second assertion of the theorem is a restatement of (\cite{Geisser-Weiletale} Theorem 1.3). We assume that $H^i(X,\mathbb{Z}(n))$ is finitely generated for all $i\in\mathbb{Z}$ ($X$ and $n$ being fixed). It follows that $H^i(X,\mathbb{Z}(n))$ is in fact finite for $i\neq 2n,2n+1$ and that the complex (\ref{LG-complex}) has finite cohomology groups. In particular the complex
\begin{equation}\label{acyclicwithQ}
...\stackrel{\cup e}{\longrightarrow} H^i_W(X,\mathbb{Z}(n))\otimes\mathbb{Q} \stackrel{\cup e}{\longrightarrow} H^{i+1}_W(X,\mathbb{Z}(n))\otimes\mathbb{Q} \stackrel{\cup e}{\longrightarrow}... 
\end{equation}
is acyclic and concentrated in degrees $[2n,2n+1]$. It also follows from the finite generation of the Weil-\'etale cohomology groups that 
$$\mathrm{ord}_{s=n}\zeta(X,s)= -\mathrm{rank}_{\mathbb{Z}}H^i_W(X,\mathbb{Z}(n))=:-\rho_n.$$
The acyclic complex (\ref{acyclicwithQ})
induces a trivialization
$$\beta:\mathbb{Q}\stackrel{\sim}{\longrightarrow} \bigotimes_{i} \mathrm{det}^{(-1)^{i}}_{\mathbb{Q}} \left( H^i_W(X,\mathbb{Z}(n))\otimes\mathbb{Q}\right)\stackrel{\sim}{\longrightarrow}
\left(\bigotimes_{i} \mathrm{det}^{(-1)^{i}}_{\mathbb{Z}} H^i_W(X,\mathbb{Z}(n))\right)\otimes\mathbb{Q}$$
such that 
$$\beta\left(\chi(H_W^*(X,\mathbb{Z}(n)),\cup e)^{-1}\right)\cdot\mathbb{Z}=\bigotimes_{i} \mathrm{det}^{(-1)^i}_{\mathbb{Z}} H^i_W(X,\mathbb{Z}(n)).$$
The class $e\in H^1(W_{\mathbb{F}_q},\mathbb{Z})=\mathrm{Hom}(W_{\mathbb{F}_q},\mathbb{Z})$ maps the Frobenius $\mathrm{Frob}\in W_{\mathbb{F}_q}$ to $1\in\mathbb{Z}$. We define the map $$W_{\mathbb{F}_q}=\mathbb{Z}\cdot\mathrm{Frob}\longrightarrow\mathbb{R}=:W_{\mathbb{F}_1}$$
as the map sending $\mathrm{Frob}$ to $\mathrm{log}(q)$, while $\theta\in H^1(W_{\mathbb{F}_1},\mathbb{R})=\mathrm{Hom}(\mathbb{R},\mathbb{R})$ is the identity map. It follows that the acyclic complex
$$...\stackrel{\cup \theta}{\longrightarrow} H^i_W(X,\mathbb{Z}(n))\otimes\mathbb{R} \stackrel{\cup \theta}{\longrightarrow} H^{i+1}_W(X,\mathbb{Z}(n))\otimes\mathbb{R} \stackrel{\cup \theta}{\longrightarrow}... $$
induces a trivialization
$$\alpha:\mathbb{R}\stackrel{\sim}{\longrightarrow} \bigotimes_{i} \mathrm{det}^{(-1)^{i}}_{\mathbb{R}} \left( H^i_W(X,\mathbb{Z}(n))\otimes\mathbb{R}\right)\stackrel{\sim}{\longrightarrow}
\left(\bigotimes_{i} \mathrm{det}^{(-1)^{i}}_{\mathbb{Z}} H^i_W(X,\mathbb{Z}(n))\right)\otimes\mathbb{R}$$
such that 
$$\alpha\left(\chi(H_W^*(X,\mathbb{Z}(n)),\cup e)^{-1}\cdot \mathrm{log}(q)^{\rho_n}\right)\cdot\mathbb{Z}=\bigotimes_{i} \mathrm{det}^{(-1)^i}_{\mathbb{Z}} H^i_W(X,\mathbb{Z}(n)).$$ 
The trivialization $\lambda$ is the product of $\alpha$ with the canonical trivialization
$$\mathbb{R}\stackrel{\sim}{\longrightarrow}\mathrm{det}_{\mathbb{Z}}R\Gamma(X,L\Omega^*_{X/\mathbb{Z}}/F^n)\otimes_{\mathbb{Z}}\mathbb{R}.$$
Hence we have
$$\lambda\left(\mathrm{log}(q)^{\rho_n}\cdot  \chi(H_W^*(X,\mathbb{Z}(n)),\cup e)^{-1}\cdot q^{-\chi(X,\mathcal{O}_X,n)}\right)\cdot\mathbb{Z}=\Delta(X/\mathbb{Z},n).$$
Moreover, formula (\ref{LG-Conj}) gives
$$\zeta^*(X,s)= \pm\mathrm{log}(q)^{-\rho_n} \cdot \chi(H_W^*(X,\mathbb{Z}(n)),\cup e) \cdot q^{\chi(X,\mathcal{O}_X,n)}$$
hence the result follows from (\cite{Geisser-Weiletale} Theorem 1.3).
\end{proof}


\begin{thebibliography}{6}
\bibitem{Geisser-Weiletale}{Geisser, T.: \emph{Weil-\'etale cohomology over finite fields.}  Math. Ann.  330 (4) (2004), 665$-$692.}
\bibitem{Geisser-arith-coh}{Geisser, T.: \emph{Arithmetic cohomology over finite fields and special valyues of $\zeta$-functions.} Duke Math. J. 133 (1) (2006), 27--57.}
\bibitem{SGA6II}{Illusie, L.: \emph{Existence de R\'esolutions Globales.} Th\'eorie des Intersections et Th\'eor\`eme de Riemann-Roch (SGA6), Lecture Notes in Mathematics 225, (1971), 160--221.}
\bibitem{Illusie71}{Illusie, L.: \emph{Complexe cotangent et d\'eformations. I.} Lecture Notes in Mathematics, Vol. 239. Springer-Verlag, Berlin-New York, 1971.}
\bibitem{Illusie72}{Illusie, L.: \emph{Complexe cotangent et d\'eformations. II.} Lecture Notes in Mathematics, Vol. 283. Springer-Verlag, Berlin-New York, 1972.}
\bibitem{Lichtenbaum-finite-field}{Lichtenbaum, S.: \emph{The Weil-\'etale topology on schemes over finite fields.}
Compositio Math.  141 (3) (2005), 689$-$702.}
\bibitem{Lichtenbaum09}{Lichtenbaum, S.: \emph{Euler characteristics and special values of zeta-functions.} Motives and algebraic cycles, 249--255, Fields Inst. Commun., 56, Amer. Math. Soc., Providence, RI, 2009.}
\bibitem{Milne86}{Milne, J.S.: \emph{Values of zeta functions of varieties over finite fields.} Amer. J. Math. 108 (1986), no. 2, 297--360.}
\bibitem{Milne13}{Milne, J.S.: \emph{Addendum to: Values of zeta functions of varieties over
finite fields, Amer. J. Math. 108, (1986), 297-360.} (2013), available at http://www.jmilne.org/.}
\bibitem{Milne-Ramachandran13}{Milne, J.S., Ramachandran, N.: \emph{Motivic complexes and special values of zeta functions.} Preprint 2013,  arXiv:1311.3166.}

\bibitem{Morin14}{Morin, B.: \emph{Zeta functions of regular arithmetic schemes at $s=0$}. Duke Math. J. 163 (2014), no. 7, 1263--1336.}

\end{thebibliography}
\end{document}